\newcommand{\RR}{{\mathbb R}}
\newcommand{\NN}{{\mathbb N}}
\newcommand{\CC}{{\mathbb C}}
\newcommand{\CI}{{C^\infty}}
\title{Outgoing solutions via Gevrey-2 properties}
\author{Jeffrey Galkowski}
\email{j.galkowski@ucl.ac.uk}
\address{Department of Mathematics, University College London, WC1H 0AY, UK}
\author{Maciej Zworski} 
\email{zworski@math.berkeley.edu}
\address{Department of Mathematics, University of California, Berkeley, CA 94720}
\begin{document}

\begin{abstract}
Gajic--Warnick \cite{gaw} have recently proposed a definition of scattering resonances based on Gevrey-2 regularity at infinity and introduced a new class of potentials for which resonances can be defined.
 We show that 
standard methods based on complex scaling apply to a slightly larger class of potentials and show existence of resonances in larger angles.
\end{abstract}

\maketitle

\section{Introduction}

One of the main issues in theoretical and numerical scattering theory is
distinguishing outgoing parts of solutions modeling scattered waves. 
The simplest example is given by 
\begin{equation}
\label{eq:defPV}
P_V:=D_x^2+V(x) , \ \  x \in [ 1 , \infty ) , \ \ \ D_x := -i \partial_x  ,
\end{equation}
where $ V $ is a {\em compactly} supported potential and we impose {\em 
Dirichlet} boundary condition at $ x = 1 $. Clearly, 
\begin{equation}
\label{eq:PVf}   ( P_V - \lambda^2 ) u = f \in \CIc ( [ 0, \infty )) , \ 
u ( 1 ) = 0  \ \Longrightarrow \ u ( x ) = 
a e^{ i \lambda x } + b e^{ - i \lambda x } , \ \ x \gg 1 .\end{equation}
The first term is called {\em outgoing} and the second term {\em incoming}
(or vice versa, depending on your convention) -- see \cite[\S 2.1]{res}.
The method of {\em complex scaling} distinguishes incoming and outgoing solutions  by 
deforming $ x $ into a curve in $ \CC $: the $ e^{ i \lambda x } $ term becomes
exponentially decaying and the $ e^{ - i \lambda x} $ term, 
exponentially growing and hence outgoing solutions are characterized as $ L^2 $ solutions -- see \cite[\S 2.7]{res} for a simple introduction. This method, introduced in the 70's by 
Aguilar--Combes, Balslev--Combes and Simon and then widely used in 
computational chemistry,  reappeared in the 90's as
the method of {\em perfectly matched layers} in numerical analysis -- see
\cite[\S 4.7]{res} for pointers to the literature. It applies to non-compactly supported potentials as long as analyticity and decay conditions on $ V ( x ) $ 
in $ |\Im x | \leq C | \Re x | $ are imposed for $ |x| \gg 1 $. 
In the simplest setting of \eqref{eq:PVf} {\em scattering resonances} are 
$ \lambda \in \CC $ for which there exists a solution with $ f = 0 $ and $ b = 0 $. 

\renewcommand\thefootnote{\dag}%

Gajic and Warnick \cite{gaw} have recently proposed an intriguing 
alternative for distinguishing incoming and outgoing solutions using 
Gevrey-2 regularity (see \eqref{eq:gev}) at infinity. In the setting of compactly supported
potentials their approach can be described as follows: put
$ Q_V ( \lambda ) := x^{2} e^{ - i \lambda x }( P_V - \lambda^2 ) e^{ i \lambda x} =  x^{2} ( D_x^2 + 2 \lambda D_x + V )$ and change variables to $ y = 1/x $. Then 
\eqref{eq:PVf} becomes
\begin{equation}
\label{eq:QV}
\begin{gathered}  Q_V ( \lambda ) u = ( D_y y^2 D_y - 2 \lambda D_y + 
y^{-2} V(1/y) ) u = f \in \CIc ( ( 0 , 1 ])  , \ \ u (1 ) = 0 , \\ u (y) = a + b e^{ - 2 i \lambda/y } , \ \ 0 < y \ll 1 . \end{gathered} 
\end{equation}
For $ \Im \lambda \geq 0 $, the outgoing solutions are simply
solutions which are smooth up to $ y = 0 $. For $ \Im \lambda < 0 $,
$ y \mapsto e^{ -2 i \lambda /y } \in \CI ( [0, 1]) $, however, it is
{\em not} in the Gevrey-2 class $ G^{2, |\lambda|-|\Re \lambda| + \epsilon} ( [ 0, 
1 ] ) $,
for any $ \epsilon > 0 $ (see \eqref{eq:gev} and the appendix)
that $ u \in G^{2, \sigma } ( [0,1])  $, $ \sigma \gg 1 $, thus
guarantees that $ u $ is outgoing. In particular, for $ f \equiv 0$, 
it gives a criterion for $ \lambda $ being a scattering resonance. Through a delicate Gevrey class analysis this idea was used in \cite{gaw} to 
extend definition of resonances to potentials satisfying 
$ y^{-2} V ( 1/y) \in G^{2, \sigma } ( [ 0 , 1 ] ) $, $ \sigma \gg 1 $. 
The region in which resonances are defined is shown in Fig.~\ref{fig}.

\begin{figure}

\includegraphics[width=7.5cm]{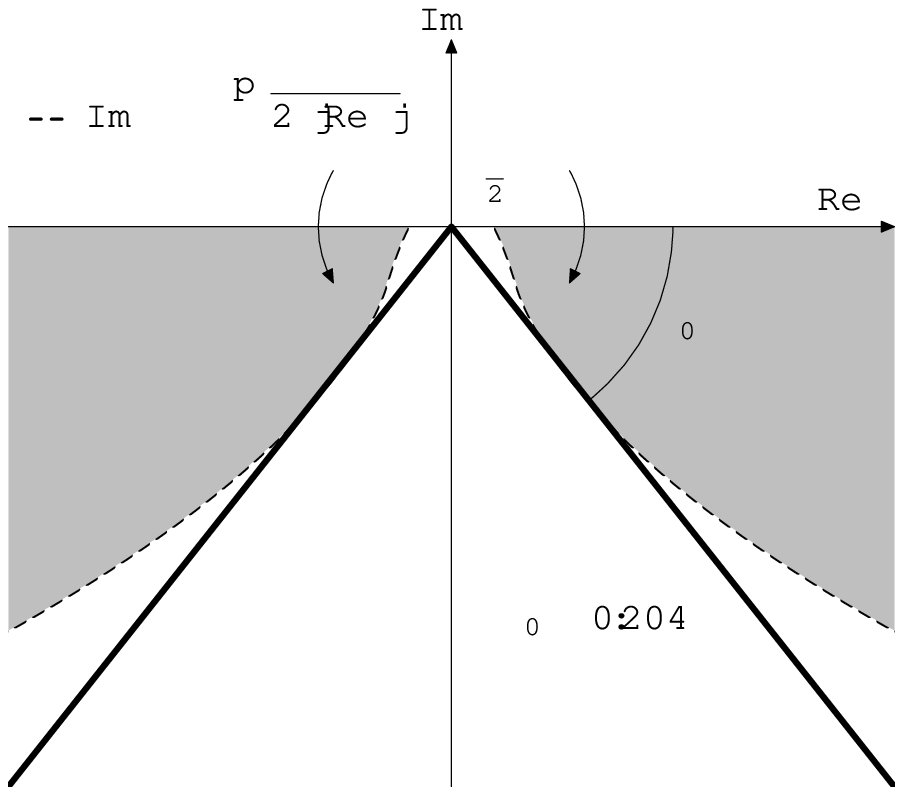}   \includegraphics[width=7.5cm]{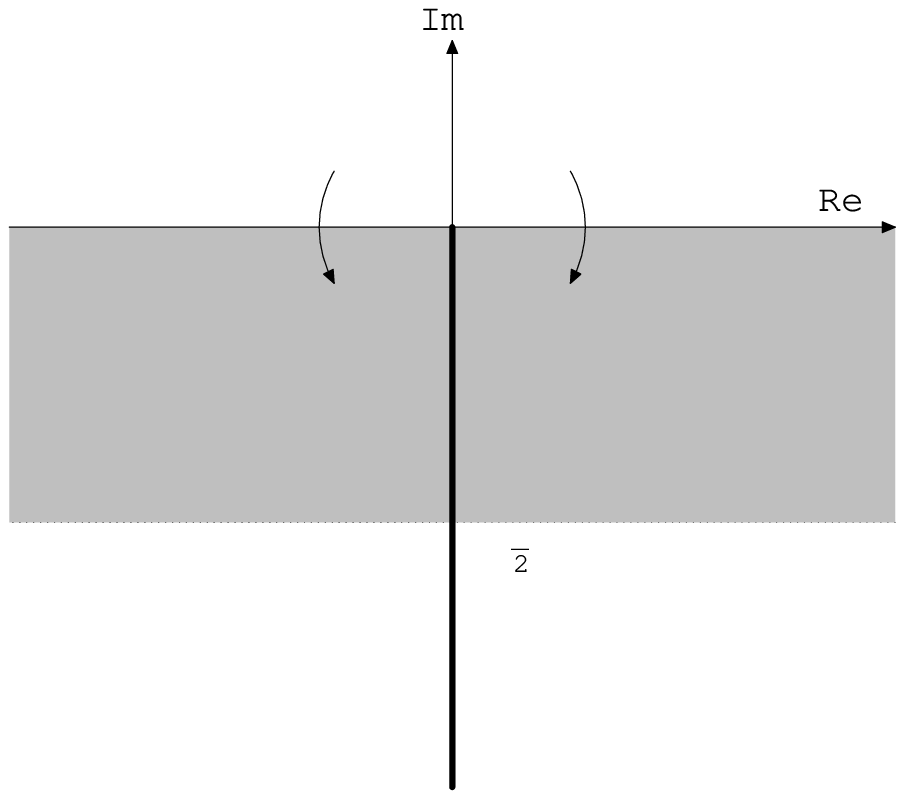}   
\label{fig}
\caption{We fix a $\sigma>0$ and on the left show the region, $\Omega_\sigma$, in which resonances are defined in 
\cite{gaw} (note the difference of conventions: here we followed
the standard convention of scattering theory \cite{res}) and on the right the illustration of the region, $\mathcal{M}_\sigma$, in this paper.
The potential satisfies
$ V ( x ) = x^{-\gamma} W ( 1/x ) $, $ W \in G^{ 2, \sigma } ( [ 0 , 1 ] ) $, with $ \gamma = 2 $ in the case of \cite{gaw} and $ \gamma =1 $
in this paper. For a fixed $ \sigma $ the region on the left is larger 
for $ |\Re \lambda | \gg 1 $ but as $ \sigma \to \infty $ the conic regions are dramatically different. The asymptotic 
boundary of $ \Omega_\sigma $ is heuristically explained using \eqref{eq:QV} and the fact that $ e^{-2 i \lambda/ x} \in \widehat G^{2, |\lambda | - |\Re \lambda | } $ (see the appendix) so that for a fixed $ \sigma$, we obtain
the curve $ | \lambda | - | \Re \lambda | = \sigma $, that 
is $ 2 \sigma | \Re \lambda | =  |\Im \lambda|^2 - \sigma^2 $.
 }
\end{figure}

In this note we observe that the potentials of the form considered in 
\cite{gaw} (in fact, potentials with slower decay at infity) can be
decomposed into a sum of a potential analytic in a large sector and an exponentially decaying potential. We then use complex scaling method
and consider the exponentially decaying potential as a perturbation. 
Using essentially well-know results this easily gives meromorphic continuation to 
strips in $ \CC \setminus i ( - \infty, 0 ] ) $.

To formulate the result we define Gevrey classes, in particular the Beurling--Gevrey class,
$ \gamma^a ( [ 0 , 1 ] )$: 
\begin{equation}
\label{eq:gev}
\begin{gathered}
G^{ a , \sigma } ( [ 0, 1 ] ) := \{ 
u \in \CI ( [ 0, 1 ] ) \, \mid \,  \exists \, C , \ \sup_{ x \in [ 0 , 1 ] }
| D_x^n u ( x ) | \leq C \sigma^{-n} (n!)^a  \}, \\
G^a := \bigcup_{ \sigma > 0 } G^{a, \sigma } , \ \ 
\gamma^a := \bigcap_{ \sigma > 0 } G^{ a, \sigma } , \ \ 
\widehat G^{a,\sigma} := G^{a,\sigma } \setminus \bigcup_{ \sigma' > \sigma}
G^{a, \sigma' }, \ \ 
a \geq 1 ,  \ \sigma > 0 ,
\end{gathered}
\end{equation}
see \cite[\S 1.3]{H1} for some fundamental results and references. With this in place we can state

\medskip

\noindent
{\bf Theorem.} {\em Suppose that $ V ( x ) = 
x^{-1} W ( 1/x ) $ with
$ W \in G^{2,\sigma} ( [ 0 , 1 ] ) $. Then the scattering resolvent, 
$$
R_V(\lambda):=(P_V-\lambda^2)^{-1}:L^2_{\comp} ( [ 1, \infty ) \to L^2_{\loc} ( [ 1, \infty ) ),  \ \ \Im \lambda \gg 1 , 
$$
continues to a meromorphic family of operators with  poles of finite rank on 
$$
 \mathcal{M}_\sigma:=\{ \lambda \in \CC \, \mid \, \Im \lambda > - \tfrac \sigma 2 \}\setminus i ( - \infty , 0] .
$$
In particular, when $ W \in \gamma^2 ( [ 0 , 1 ] ) $, the resolvent 
continues to $ \CC \setminus i ( - \infty , 0] $.
}

\medskip

\noindent
{\bf Remarks.} 1. The motivation in \cite{gaw} came from the study of
quasinormal modes of black holes \cite{gaw1}. In the case considered there, unlike in de Sitter 
space situations (positive cosmological constant -- see Bony--H\"afner \cite{boh}, Dyatlov \cite{dyb} and Vasy \cite{vas} 
for detailed studies and references), the issue of seeing the effects of 
quasinormal modes on wave evolution is complicated by the behaviour at 
$ 0 $. However, as shown by Dyatlov \cite{dyc}, some results can be obtained by considering frequency cut-offs. For fine decay results related to the behaviour of the resolvent at zero see Hintz \cite{hin} and references given there.

\noindent
2. The necessity of a cut at $ 0 $ or a continuation to (a part of) the Riemann surface of $ \lambda \mapsto \log \lambda $ is a genuine 
phenomenon as can be seen by considering $ V ( x ) = \alpha/ x^2 $.
For instance, for $ \alpha = n^2 - \frac14 $, $ P_V $ corresponds to the 
radial component of the Dirichlet Laplacian on $ \RR^2 \setminus 
B( 0 , 1 ) $ acting on the $ n$th Fourier component -- see Christiansen
\cite{christ} for recent advances in scattering by obstacles in even dimensions and references. The proof here would allow moving to 
an angle $ \pi $ on the Riemann surface (rather than $ \pi/2 $ as stated) 
at the expense of shrinking the strip (which gives the whole Riemann surface $ - \pi < \arg \lambda < 2 \pi $.

\noindent
3. Defining resonances  for potentials which are not-analytic in conic neighbourhoods of infinity, or rather seeing their impact on ``observable" phenomena, 
has been an object of recent studies. Martinez--Ramond--Sj\"ostrand \cite{mars} 
proved that in in some non-analytic situations resonances are invariantly defined up to any power of their imaginary part. Bony--Michel--Ramond \cite{bon} showed that
many phenomena associated to resonances can be seen for potentials decaying at infinity: the ``interaction region" (where, say, the trapped set 
lies in the case of AdS black holes) is responsible for ``observable" effects.

\noindent
4. We expect that the improved angle estimate of this note and 
the better large $ \Re \lambda $ region can be combined. One approach would require the methods of Helffer--Sj\"ostrand \cite{HS} recently revisited
by Guedes Bonthonneau--J\'ez\'equel \cite{Guj} and the authors 
\cite{gaz1}. However, without stronger motivation, we have restricted ourselves to more straightforward methods.

The paper is organized as follows. In \S \ref{gev} we revisit 
well known asymptotic results of Borel, Ritt and Watson to decompose 
potentials in the theorem. In \S \ref{exp} we show that having 
a meromorphic extension guarantees exponential growth estimates for 
the resolvent. That meromorphic continuation is obtained using the method
of complex scaling for the analytic part of the potential. Finally, in 
\S \ref{mer}, we use this exponential estimates to show that adding an
exponentially decay potential given meromorphy in a strip in $ \CC 
\setminus i ( -\infty, 0 ] $. Except for invoking the now standard method of complex scaling, the paper is essentially self contained. 

\medskip\noindent\textbf{Acknowledgements.} Partial support for M.Z. by the National Science Foundation grant DMS-1500852 is also  
gratefully acknowledged. 

\section{Gevrey-2 property at infinity}
\label{gev} 

In this section, we decompose a $G^{2,\sigma}$ potential into one which is analytic in a large sector and one which decays exponentially towards infinity.

\begin{prop}
\label{p:decomp}
For $ V \in \CI ( [ 1, \infty ) ) $ satisfying
$ V ( x ) = W ( 1/x ) $, $ W \in G^{2, \sigma} ( [ 0 , 1 ] ) $ 
and any $ \rho < \sigma $ and $ \epsilon > 0 $ we have
\begin{equation}
\label{eq:decomp}
\begin{gathered}
 V ( x ) = V_1 ( x ) + V_2 ( x ) , \ \ 
 V_1 \in \CI ( [ 0 , \infty )),  \ \ 
 \exists \, C  \ \ | V_1
 ( x ) | 
 \leq C e^{ - \rho x  } , \\
\text{ $ V_2 ( z )$  is holomorphic in $ \mathcal C_{\epsilon} $
and } \\ V_2 ( z) \sim \sum_{\ell = 0 }^\infty \frac{ W^{(\ell)} ( 0 ) }
{ \ell!} {z^{-\ell}} , \ z \to \infty  , \ z \in \mathcal C_{\epsilon} , 
\end{gathered}
\end{equation}
where $ \mathcal C_{\epsilon} := \{ z : |\arg z| < \pi/2 -\epsilon \} $. 
\end{prop}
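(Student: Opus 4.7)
The plan is to apply the Borel--Laplace method to the formal Taylor series of $W$ at $0$. The Gevrey-$2$ bound $|W^{(\ell)}(0)| \le C \sigma^{-\ell} (\ell!)^2$ implies $|W^{(\ell)}(0)/(\ell!)^2| \le C\sigma^{-\ell}$, so the Borel transform
$$
B(t) := \sum_{\ell \ge 0} \frac{W^{(\ell)}(0)}{(\ell!)^2}\, t^\ell
$$
is holomorphic on $|t| < \sigma$. Fixing any $\sigma_0 \in (\rho, \sigma)$ I would define
$$
V_2(z) := z \int_0^{\sigma_0} e^{-tz} B(t)\, dt, \qquad z \in \CC .
$$
Since $B$ is holomorphic on a neighbourhood of $[0,\sigma_0]$ and the contour is compact, $V_2$ is entire in $z$, hence a fortiori holomorphic in $\mathcal C_\epsilon$.

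For the asymptotic expansion I would split $B(t) = \sum_{\ell<N}\frac{W^{(\ell)}(0)}{(\ell!)^2}t^\ell + R_N(t)$ with $|R_N(t)|\le C'(|t|/\sigma)^N$ on $[0,\sigma_0]$ (from summing the geometric tail). The polynomial part integrates, via the lower incomplete gamma function, to $\sum_{\ell<N}\frac{W^{(\ell)}(0)}{\ell!}z^{-\ell}$ modulo an error $O(e^{-\sigma_0 \Re z})$, which is $O(e^{-c|z|})$ on any subsector of $\mathcal C_\epsilon$. The $R_N$-contribution is controlled on $\mathcal C_\epsilon$ by using $\Re(tz)\ge t|z|\sin\epsilon$, giving $C_\epsilon N!(\sigma |z|\sin\epsilon)^{-N}$. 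This is the standard truncated Borel--Laplace estimate and produces the required Gevrey-$1$ asymptotic $V_2(z) \sim \sum \frac{W^{(\ell)}(0)}{\ell!}z^{-\ell}$ in $\mathcal C_\epsilon$.

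The substantial step is upgrading this to exponential decay of $V_1 := V - V_2$ along the positive real axis. On $z = x > 0$ no $\sin\epsilon$ loss is incurred, so the $R_N$-term contributes at most $C N!(\sigma x)^{-N}$ and the incomplete-gamma error contributes $O(e^{-\sigma_0 x})$. Taylor's theorem at $0$ for $W$, with the Gevrey-$2$ bound on $W^{(N)}$, gives the matching remainder $C N!(\sigma x)^{-N}$ for $W(1/x)$, so $|V_1(x)|\le C N!(\sigma x)^{-N} + C' e^{-\sigma_0 x}$ for every $N \in \NN$. Stirling's formula optimises the first term near $N \sim \sigma x$ with minimum of order $\sqrt{2\pi\sigma x}\,e^{-\sigma x}$, and since $\rho < \sigma_0 < \sigma$ both terms are dominated by $e^{-\rho x}$ up to a constant. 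To arrange $V_1 \in \CI([0,\infty))$ rather than just $\CI([1,\infty))$, I would fix any smooth extension of $V$ from $[1,\infty)$ to $[0,\infty)$ before forming $V_1 := V - V_2$; on the compact set $[0,1]$ the exponential bound is automatic after adjusting the constant. The only real obstacle is the sharp Stirling optimisation that turns the Gevrey-$1$ sector asymptotic into honest exponential decay on the axis; everything else is bookkeeping of standard Borel--Laplace estimates.
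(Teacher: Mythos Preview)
Your proposal is correct and is essentially the paper's Watson/Borel--Laplace argument: your $B$ and $V_2(z)=z\int_0^{\sigma_0}e^{-tz}B(t)\,dt$ coincide (via $z\mapsto 1/z$) with the paper's $g$ and $W_2(z)=z^{-1}\int_0^{\rho_1}g(\zeta)e^{-\zeta/z}\,d\zeta$, and the sectorial asymptotic together with the Stirling optimisation for $|V_1(x)|\le Ce^{-\rho x}$ is the same computation. The only cosmetic difference is that the paper first records $W_1:=W-W_2\in G^{2,\rho}$ with flat jet at $0$ and then Taylor-expands $W_1$, whereas you compare the Taylor remainder of $W(1/x)$ with the truncated Borel--Laplace remainder of $V_2$ directly; these are equivalent routes to the same bound.
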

\begin{proof}
We observe that the decomposition \eqref{eq:decomp} follows from 
the following decomposition of $ W $: for all $ \rho < \sigma$, 
\begin{equation}
\label{eq:decomp2} 
\begin{gathered}  W ( x ) = W_1 ( x ) + W_2 ( x ) , \ \ 
W_1 \in G^{ 2 , \rho } ( [ 0, 1 ]) , \ \ \rho < \sigma \ \ \forall \, \ell \in \NN \ \ W_1^{(\ell)} ( 0 ) = 0 , \\
\text{ $ W_2 ( z )$  is holomorphic in $ \mathcal C_\epsilon $ 
and } \ W_2 ( z) \sim \sum_{\ell = 0 }^\infty \frac{ W^{(\ell)} ( 0 ) }
{ \ell!} {z^{\ell}} , \ z \to 0  , \ z \in \mathcal C_{\epsilon} . 
\end{gathered}
\end{equation}
In fact, if $ V_2 ( z ) := W_2 ( 1/z ) $ then the expansion of $ V_2 $ follows. On the other hand, Taylor expansion at $ 0 $ and 
the Gevrey property imply that
\[  | W_1 ( x ) | \leq \min_{ \ell \in \NN} x^\ell \sup_{ [ 0 , x ] } | W_1^{(\ell)} |/ \ell! \leq C \min_{\ell \in \NN} (x/\rho)^\ell \ell! \leq C' e^{ - \rho /x  } , \]
which gives the required estimate for $ V_1 ( x ) = W_1 ( 1/x ) $.

Hence we need to establish \eqref{eq:decomp2} and for that we use the classical approach of Watson \cite{wat} (see \cite[Chapter]{bals} for a modern presentation). To implement it, we put
\begin{equation}
\label{eq:fn}  f_n := \frac{ W^{(n)} ( 0 ) } {n!}\,, \ \ \   |f_n | \leq K \sigma^{-n} 
n! \,, \ \ \   g ( \zeta) := \sum_{ n=0}^\infty \frac{ f_n \zeta^n}{n!} .
\end{equation}
The function $ g $ is then holomorphic in $ |\zeta | < \sigma $ and for
$ 0 < \rho_1< \sigma $ we define 
\begin{equation}
\label{eq:defW2}
  W_2 ( z ) := z^{-1} \int_0^{\rho_1}  g ( \zeta ) e^{-\zeta/z } d \zeta , 
\end{equation}
which is holomorphic for $ z \in \Lambda $, the Riemann surface of $ 
\log z $. Then for $ \cos ( \arg z ) \geq \epsilon   $, 
\[  | W_2 ( z ) - \sum_{ n=0}^{N-1} f_n z^n  | 
\leq  C_{\rho_1}K N \epsilon^{-1}  |z|^N
 N!  ( \epsilon \rho_1 )^{-N}  .\]
 
 Indeed, 
 \begin{align*}
&\Big|W_2(z)-\sum_{n=0}^{N-1}f_nz^n\Big|
\leq \Big|z^{-1}\int_0^{\rho_1} g(\zeta )e^{-\zeta/z}d\zeta-\sum_{n=0}^{N-1}\int_0^\infty z^{-1}\frac{f_n}{n!}\zeta^ne^{-\zeta/z}d\zeta\Big| \\
&\ \ \ \  \leq \frac{\sigma}{\sigma-\rho_1}\int_0^{\rho_1} |z|^{-1}K\zeta^N \sigma^{-N} e^{-\zeta\epsilon/|z|}d\zeta +\int_{\rho_1}^{\infty} |z|^{-1}K\sum_{n=0}^{N-1}\sigma^{-n}\zeta^ne^{-\zeta\epsilon/|z|}d\zeta\\
&\ \ \ \ \leq \frac{\sigma}{\sigma-\rho_1}K\epsilon^{-1}|z|^NN!(\epsilon\sigma)^{-N}+\int_{\rho_1}^\infty KN|z|^{-1}(1+(\sigma^{-1}\zeta)^{N})e^{-\zeta \epsilon/|z|}d\zeta\\
&\ \ \ \ \leq \frac{\sigma}{\sigma-\rho_1}K\epsilon^{-1}|z|^NN!(\epsilon\sigma)^{-N}+K\epsilon^{-1}Ne^{-\epsilon\rho_1/|z|}+KN\epsilon^{-1}|z|^NN!(\epsilon\sigma)^{-N}\\
&\ \ \ \ \leq C_{\rho_1}\epsilon^{-1}|z|^NN!(\epsilon\rho_1)^{-N}
 \end{align*}
(See \cite[Exercise 3, p.16]{bals}.) This shows the existence of the expansion in \eqref{eq:decomp}.

Since this expansion holds with all derivatives, on the real axis (we can then take $ \epsilon $ arbitrarily close to $ 1 $) we have
$ |W_2^{(\ell)} ( x ) | \leq C_\rho (\ell!)^2 \rho^{-\ell} $ for any $
\rho < \rho_1  $. Since the expansion also shows that 
$ W_2^{(\ell)} ( 0 ) = W^{(\ell) } ( 0 )$, \eqref{eq:decomp2}, and hence
\eqref{eq:decomp}, follow.
\end{proof}

\section{Meromorphic continuation}

Our main theorem is a consequence of the following proposition together with Proposition~\ref{p:decomp}.
\begin{prop}
\label{p:mer}
Suppose that $ V \in L^\infty ( [1,\infty) ; \mathbb{C} ) $ can be decomposed
as follows 
\begin{equation}
\label{eq:decomp3} 
\begin{gathered} V ( x ) = V_1 ( x ) + V_2 ( x ) , \ \ 
| V_1 ( x ) | \leq C e^{ - \gamma x } , \\
\text{ $ V_2 ( z )$  is holomorphic for $ | \arg z | < \alpha < \pi $ and 
$ V_2 ( z ) \underset{z\to \infty}{\longrightarrow} 0 $ there.}\\
\end{gathered}
\end{equation}
Then $ R ( \lambda ) $  has a meromorphic continuation to 
$  \{ \lambda :  -\alpha <\arg \lambda <\pi+\alpha , \ 2\Im \lambda > 
- \gamma  \} $. 
\end{prop}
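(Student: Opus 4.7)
The strategy is to combine two standard ingredients: complex scaling applied to the analytic piece $V_2$, and analytic Fredholm theory applied to the exponentially decaying perturbation $V_1$ on weighted $L^2$ spaces, using the resolvent estimates of~\S\ref{exp}.

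First I would handle $V_2$ by complex scaling. Since $V_2$ is holomorphic and vanishes at infinity on the sector $\{|\arg z|<\alpha\}$, for each $\theta\in(-\alpha,\alpha)$ one deforms $[1,\infty)$ to a contour $\Gamma_\theta$ which coincides with $[1,R]$ for $x\le R$ and with $\{R+(t-R)e^{i\theta}:t\ge R\}$ for $t\ge R$, lying inside the analyticity sector of $V_2$ and preserving the Dirichlet condition at $x=1$. The resulting scaled operator $P_{V_2,\theta}$ on $L^2(\Gamma_\theta)\simeq L^2([1,\infty))$ has essential spectrum $e^{-2i\theta}[0,\infty)$ in the $\lambda^2$-plane, so $(P_{V_2,\theta}-\lambda^2)^{-1}$ is meromorphic with poles of finite rank on $\{-\theta<\arg\lambda<\pi+\theta\}$. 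By the Aguilar--Balslev--Combes principle (see \cite[\S\S 2.7, 4.5]{res}), the unscaled resolvent $R_{V_2}(\lambda)$ agrees with the scaled one when sandwiched between compactly supported cut-offs, hence inherits the meromorphic extension. Taking the union over $\theta$ yields meromorphy of $R_{V_2}$ on the full sector $\{-\alpha<\arg\lambda<\pi+\alpha\}$.

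Next I would reinstate $V_1$ by factoring $V_1=ab$ with $a(x):=V_1(x)\,e^{\gamma x/2}\in L^\infty$ and $b(x):=e^{-\gamma x/2}$, and exploiting the identity
\begin{equation*}
R_V(\lambda)=R_{V_2}(\lambda)-R_{V_2}(\lambda)\,b\,(I+K(\lambda))^{-1}\,a\,R_{V_2}(\lambda),\qquad K(\lambda):=a\,R_{V_2}(\lambda)\,b .
\end{equation*}
The weighted estimates of~\S\ref{exp} should imply that, for $\lambda$ in the sector with $2\Im\lambda>-\gamma$, the operator $K(\lambda)$ extends to a compact, holomorphic family on $L^2([1,\infty))$ away from the poles of $R_{V_2}$. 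Analytic Fredholm theory then yields $(I+K(\lambda))^{-1}$ as a meromorphic family with finite-rank poles on $\mathcal{R}:=\{-\alpha<\arg\lambda<\pi+\alpha,\ 2\Im\lambda>-\gamma\}$, and substituting back into the displayed identity produces the claimed meromorphic extension of $R_V(\lambda)$.

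The main obstacle is making the compactness of $K(\lambda)$ quantitative in the correct range $2\Im\lambda>-\gamma$. After scaling and passage to arc-length on $\Gamma_\theta$, the Schwartz kernel of $R_{V_2}(\lambda)$ behaves like that of the free outgoing resolvent, growing as $e^{|\Im\lambda|\,|x-y|}$ transversally to the deformed ray; this growth is absorbed by the weights $a(x)$ and $b(y)$ precisely when $2\Im\lambda>-\gamma$. Upgrading the resulting Hilbert--Schmidt bound to one that is locally uniform in $\lambda$ across poles, and matching the bounds for different scaling angles $\theta$ so that $K(\lambda)$ depends only on $\lambda$ and not on the deformation, is exactly the content of~\S\ref{exp}.
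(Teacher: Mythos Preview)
Your proposal is correct and follows essentially the same route as the paper: complex scaling for $V_2$ to get meromorphy of $R_{V_2}$ on the sector, then the exponential weighted estimates of \S\ref{exp} (Proposition~\ref{p:expEst}) to make the $V_1$-perturbation a compact analytic family, and finally analytic Fredholm theory. The only cosmetic difference is that you use the Birman--Schwinger factorization $K(\lambda)=aR_{V_2}(\lambda)b$ on $L^2$, whereas the paper works with $I+V_1R_{V_2}(\lambda)$ directly on the weighted space $e^{-\gamma'\langle x\rangle/2}L^2$; these are equivalent formulations of the same perturbation argument.
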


Throughout this section, for $V\in L^\infty(\mathbb{R};\mathbb{C})$, we let
$
P_V:=D_x^2+V
$
be defined by the quadratic form 
$$
Q(u,v)=\langle D_xu,D_x v\rangle_{L^2(1,\infty)}+\langle Vu,v\rangle_{L^2(1,\infty)}
$$
with form domain $H_0^1(1,\infty)$. Then, for $\Im\lambda\gg 1$, 
$$
R_{_{V}}(\lambda):=(P_{V}-\lambda^2)^{-1}:L^2\to L^2
$$
the \emph{resolvent of $P_{V}$} is defined by spectral theory.

In order to prove Proposition~\ref{p:mer} we first observe that the now standard method of complex scaling (see Sj\"ostrand \cite{Sj96} for 
a presentation from the PDE point of view and references)
shows that 
$$
R_{V_2}(\lambda):L^2_{\comp} ( [0 , \infty ) \to L^2_{\loc} ( [ 0 , \infty) )
$$
continues from $\Im \lambda \gg 1$ to $-\alpha<\arg \lambda <\pi +\alpha$.

\subsection{Exponential estimates on $R_{_{V_2}}$}
\label{exp}

The goal of this section is to prove the following proposition. A general case would follow from Helffer--Sj\"ostrand theory \cite{HS} but we opt for a quick one dimensional argument. 
\begin{prop}
\label{p:expEst}
Fix $\gamma>0$. Then, the meromorphic 
family  $ \lambda \mapsto R_{_{V_2}} ( \lambda ) : L^2_{\comp} \to L^2_{\loc} $ extends to a meromorphic family 
\begin{equation}
\label{eq:gammer} \lambda\mapsto R_{_{V_2}}(\lambda): e^{-\gamma \langle x\rangle }L^2\to e^{\gamma \langle x\rangle}H^2,
\end{equation}
for 
$$  \{\lambda \in \mathbb{C} \mid \Im \lambda >-\gamma \} \cap \mathcal{W}_\alpha, \ \ 
\mathcal{W}_\alpha:=\{ -\alpha <\arg \lambda <\pi +\alpha\}.
$$
\end{prop}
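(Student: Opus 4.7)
The plan is to build $R_{V_2}(\lambda)$ explicitly out of two one-dimensional ODE solutions and read the weighted bound off pointwise estimates, avoiding the heavier Helffer--Sj\"ostrand machinery. Let $\phi_D(x,\lambda)$ denote the solution of $(P_{V_2} - \lambda^2)u = 0$ on $[1,\infty)$ with $\phi_D(1) = 0$, $\phi_D'(1) = 1$; a Volterra comparison with the free solution $\sin\lambda(x-1)/\lambda$ shows that $\phi_D$ is entire in $\lambda$ and satisfies $|\phi_D(x,\lambda)| \leq C_\lambda e^{|\Im\lambda|\,x}$ locally uniformly in $\lambda$.

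The nontrivial object is an outgoing solution $\psi_+(\cdot,\lambda) \sim e^{i\lambda x}$ at infinity, which I would construct by a contour-deformed Volterra iteration. For $\theta \in (-\alpha,\alpha)$ and $R_0$ large enough that $V_2$ is holomorphic on $\{R_0 + te^{i\theta} : t \geq 0\}$, I solve
\[
\psi_+(z,\lambda) = e^{i\lambda z} + \int_z^{\infty e^{i\theta}} \frac{\sin\lambda(w-z)}{\lambda}\, V_2(w)\, \psi_+(w,\lambda)\, dw
\]
along the contour $\Gamma_\theta = [1,R_0] \cup (R_0 + e^{i\theta}\mathbb{R}_+)$. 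Substituting $v := e^{-i\lambda z}\psi_+$ converts the kernel to $(e^{2i\lambda(w-z)}-1)/(2i\lambda)$, which is uniformly bounded on $\Gamma_\theta$ precisely when $\Im(\lambda e^{i\theta}) > 0$, i.e.\ $\arg\lambda \in (-\theta, \pi-\theta)$; combined with the decay of $V_2$ along $\Gamma_\theta$ this forces convergence to a unique bounded $v$. As $\theta$ varies in $(-\alpha,\alpha)$ the corresponding half-planes cover $\mathcal{W}_\alpha$, and uniqueness of analytic continuation glues the local constructions into one family, with the pointwise bound $|\psi_+(x,\lambda)| \leq C_\lambda e^{-\Im\lambda\,x}$ for $x \geq 1$.

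Assembling the resolvent yields the Schwartz kernel $G(x,y;\lambda) = W(\lambda)^{-1}\phi_D(\min(x,y),\lambda)\,\psi_+(\max(x,y),\lambda)$ with Wronskian $W(\lambda)$ meromorphic on $\mathcal{W}_\alpha$, so poles of $R_{V_2}$ come exactly from zeros of $W$. The two bounds combine to $|\phi_D(x)\psi_+(y)| \leq C_\lambda e^{|\Im\lambda|x - \Im\lambda y}$: for $\Im\lambda \geq 0$ and $x \leq y$ this is $\leq C_\lambda$, while for $-\gamma < \Im\lambda < 0$ it is $\leq C_\lambda e^{|\Im\lambda|(x+y)}$. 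In either regime $|\Im\lambda| < \gamma$, so the conjugated kernel $e^{-\gamma\langle x\rangle}G(x,y;\lambda)e^{-\gamma\langle y\rangle}$ decays exponentially in $x+y$, and Schur's test delivers $e^{-\gamma\langle x\rangle}R_{V_2}(\lambda)e^{-\gamma\langle x\rangle}\colon L^2 \to L^2$. The upgrade to an $H^2$ output is routine weighted elliptic regularity: from $D_x^2 u = (\lambda^2 - V_2)u - f$ and $V_2 \in L^\infty$, the weighted $L^2$ estimate on $u = R_{V_2}(\lambda)f$ passes to the weighted $H^2$ estimate.

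The main obstacle I anticipate is keeping the constants $C_\lambda$ locally uniform as $\lambda$ approaches the boundary of $\{\Im\lambda > -\gamma\} \cap \mathcal{W}_\alpha$: the contour angle $\theta$ has to be tuned to $\arg\lambda$ so that $\Im(\lambda e^{i\theta}) > 0$ is maintained with room to spare, and the Volterra iteration must be shown to converge uniformly on compacta away from the zeros of $W$. This is exactly where the holomorphic decay of $V_2$ in the sector $\{|\arg z| < \alpha\}$ plays its essential role.
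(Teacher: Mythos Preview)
Your approach is correct and genuinely different from the paper's. The paper does \emph{not} write down the resolvent kernel explicitly: it takes the meromorphic continuation $R_{V_2}(\lambda):L^2_{\comp}\to L^2_{\loc}$ from complex scaling as a black box, proves a weighted a priori estimate (Lemma~\ref{l:apriori}) by writing $(P_{V_2}-\lambda^2)u=f$ as a first-order system and bounding the propagator $U_s(x)$ via the operator norm of $i(A-A^*)$, and then bootstraps through a duality argument ($\langle R_{V_2}(-\bar\lambda)\chi f,g\rangle = \langle f,\chi R_{V_2}(\lambda)g\rangle$) to pass from $L^2_{\comp}\to e^{\gamma\langle x\rangle}L^2$ to $e^{-\gamma\langle x\rangle}L^2\to L^2_{\loc}$, applying the a priori estimate once more to close. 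Your route instead rebuilds the continuation from scratch via Jost-type solutions, the contour-deformed Volterra iteration playing the role that complex scaling plays in the paper, and the weighted bound then comes from Schur's test on the explicit kernel rather than from an a priori estimate plus duality. What you gain is a fully self-contained argument that does not invoke complex scaling; what the paper's version gains is that the a priori estimate is agnostic to how the continuation was obtained and would transplant more readily to settings where an explicit Green's kernel is unavailable. One small point: your claimed bound $|\psi_+(x,\lambda)|\le C_\lambda e^{-\Im\lambda\,x}$ on the \emph{real} axis for $\Im\lambda<0$ does not follow immediately from boundedness of $v$ on the deformed contour $\Gamma_\theta$; you need either to propagate from $R_0$ along $[R_0,\infty)$ by the same first-order system estimate the paper uses (which costs an arbitrarily small $\epsilon$ in the exponent, harmless since $|\Im\lambda|<\gamma$), or to run the Volterra iteration with $x$-dependent rays $x+e^{i\theta}\mathbb{R}_+$ and invoke Cauchy's theorem for consistency.
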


We first need the following apriori estimates.
\begin{lemm}
\label{l:apriori}
Let $V\in L^\infty([1,\infty);\mathbb{C})$ such that $\lim_{x\to \infty}V(x)=0$ and fix $\gamma>0$ Then, for all $\lambda$ with $\gamma> |\Im \lambda|$, $\lambda\neq 0$, there is $C(\lambda) >0$ such that for all $u\in H^2_{\loc}$, 
\begin{gather}
\label{e:est1}\|e^{-\gamma\langle x\rangle }u\|_{H^2}\leq C(\lambda) \left(\|e^{\gamma\langle x\rangle }(P_V-\lambda^2)u\|_{L^2}+\|u\|_{L^2(1,4)}\right).
\end{gather}

 \end{lemm}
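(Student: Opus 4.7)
The plan is to conjugate $P_V-\lambda^2$ by the weight $e^{\gamma\langle x\rangle}$, reducing the statement to a standard a priori estimate for a second-order elliptic ODE on $[1,\infty)$.

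Setting $\phi(x):=\gamma\langle x\rangle$ and $v:=e^{-\phi}u$, the conjugation identity $e^{-\phi}D_x^2 e^{\phi}=(D_x-i\phi')^2$ gives
\[
Sv:=e^{-\phi}(P_V-\lambda^2)e^{\phi}v=\bigl((D_x-i\phi')^2+V-\lambda^2\bigr)v,
\]
so $Sv=e^{-\phi}(P_V-\lambda^2)u$. Since $e^{-\phi}\le e^{\phi}$ on $[1,\infty)$ and $\|v\|_{H^2}=\|e^{-\phi}u\|_{H^2}$, estimate \eqref{e:est1} follows once I prove
\[
\|v\|_{H^2}\le C(\lambda)\bigl(\|Sv\|_{L^2}+\|v\|_{L^2(1,4)}\bigr).
\]

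Next I verify that $S$ is uniformly elliptic at infinity. The principal symbol is $\xi^2$, and since $\phi'(x)\to\gamma$ and $V(x)\to 0$ as $x\to\infty$, the full symbol $(\xi-i\phi'(x))^2+V(x)-\lambda^2$ converges to $p_\infty(\xi):=(\xi-i\gamma)^2-\lambda^2$. The roots $\xi=i\gamma\pm\lambda$ of $p_\infty$ have imaginary parts $\gamma\pm\Im\lambda$, both strictly positive under the hypothesis $\gamma>|\Im\lambda|$; consequently $p_\infty$ has no real roots and $\inf_{\xi\in\RR}|p_\infty(\xi)|>0$.

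With ellipticity both in the interior and at infinity, $S$ is Fredholm on the appropriate Sobolev spaces. An explicit one-dimensional argument factors the asymptotic operator $L=(D_x-(i\gamma+\lambda))(D_x-(i\gamma-\lambda))$ and inverts each factor by elementary integrals (using the positive decay rates $\gamma\pm\Im\lambda$), then treats $S-L$ as a compact perturbation. Standard Fredholm theory then gives $\|v\|_{H^2}\le C(\|Sv\|_{L^2}+\|v\|_{L^2})$. To sharpen $\|v\|_{L^2}$ to $\|v\|_{L^2(1,4)}$, note that $\ker S$ is finite-dimensional and consists of smooth classical solutions; by ODE uniqueness no nontrivial such solution can vanish on any open interval. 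Hence the restriction $\ker S\hookrightarrow L^2(1,4)$ is an injective linear map between finite-dimensional spaces, therefore bounded below; decomposing $v$ into its projection onto $\ker S$ and the orthogonal complement (on which $S$ has closed range and is bounded below) yields the stated bound. The constant $C(\lambda)$ depends on $\lambda$ and blows up only at the discrete set of resonances of $S$.

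The main obstacle is this final step: converting the global $L^2$ error from the Fredholm bound into the compact $L^2(1,4)$ error. In one dimension this reduces to ODE uniqueness, which is precisely why the authors remark that the 1D setting admits a quick argument in place of the general Helffer--Sj\"ostrand machinery.
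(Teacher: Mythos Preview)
Your approach is correct but genuinely different from the paper's. The paper does \emph{not} conjugate by the weight; instead it rewrites $(P_V-\lambda^2)u=f$ as a first-order system $D_x w_\lambda = A(x)w_\lambda + (0,\lambda^{-1}f)^T$, bounds the propagator $U_s(x)$ via the eigenvalues $\pm|2\Im\lambda - i\lambda^{-1}V(x)|$ of $i(A-A^*)$ (which are eventually below $\gamma+|\Im\lambda|$ since $V\to 0$), and then integrates the Duhamel formula against $e^{-\gamma|x|}$, averaging the initial point $s$ over $(2,3)$. This yields the weighted $H^1$ bound directly, with the compact error $\|u\|_{H^1(2,3)}$ appearing from the averaging; local elliptic regularity then upgrades to $H^2$ and replaces $H^1(2,3)$ by $L^2(1,4)$.

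Your route---conjugation, Fredholm theory for the constant-coefficient model $L=(D_x-i\gamma-\lambda)(D_x-i\gamma+\lambda)$, compact perturbation, then ODE uniqueness to localize the error---is cleaner conceptually and makes transparent why the hypothesis $\gamma>|\Im\lambda|$ is exactly the condition that both asymptotic characteristic roots lie off the real axis. Two points to tighten: (i) your Fredholm estimate is stated for $v\in H^2$, whereas the lemma is for $u\in H^2_{\loc}$; you need the same truncation/approximation step the paper carries out to pass from compactly supported $u$ to general $H^2_{\loc}$; (ii) your final sentence about $C(\lambda)$ blowing up ``at the discrete set of resonances of $S$'' is misleading---your own decomposition argument (projecting onto $\ker S$ and its complement, with the kernel controlled via injectivity into $L^2(1,4)$) already gives a finite $C(\lambda)$ for \emph{every} $\lambda\neq 0$ with $\gamma>|\Im\lambda|$, which is what the lemma asserts. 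The paper's propagator method sidesteps both issues by never invoking invertibility of any operator.
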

\begin{proof}
First, suppose that $u\in C_c^\infty$ and 
$
(P_V-\lambda^2)u=f
$.
Then, with
$$
w_\lambda(x):=\begin{pmatrix}u(x)\\\lambda^{-1}D_xu(x)\end{pmatrix},
$$
we have 
$$
D_xw_\lambda = A(x)w_\lambda+\begin{pmatrix}0 \\ \lambda^{-1}f\end{pmatrix},\qquad A(x):=\begin{pmatrix} 0&\lambda\\ \lambda -\lambda^{-1}V&0\end{pmatrix}.
$$
Now, define $U_s(x):\mathbb{C}^2\to \mathbb{C}^2$ such that  
$
D_x U_s(x)=A(x)U_s (x)$, $ U_s(s)=\Id$. 
Then, 
$$
\partial_x \|U_s(x)u_0\|^2=\langle i (A-A^*)U_s(x)u_0, U_s(x)u_0 \rangle.
$$
The eigenvalues of $i(A-A^*)(x)$ are given by
$$
\pm\sigma(x)=\pm |2\Im \lambda -i\lambda^{-1}V(x)|
$$
Note that, since $\lim_{x\to \infty}V=0$, and $|\lambda|>0$, there is $R>1$ such that for $x>R$ $\sigma(x)<(\gamma+|\Im \lambda|)$. Therefore, for $x>s$,
\begin{align*}
\|U_s(x)u_0\|&\leq e^{\frac{1}{2}\int_s^x \sigma(t)dt}\|u_0\|\leq  Ce^{\frac{1}{2}(\gamma+|\Im \lambda|)(x-s)}\|u_0\|.
\end{align*}
Using a similar argument for $x<s$, we have
$$
\|U_s(x)u_0\|\leq C e^{\frac{1}{2}(\gamma+|\Im \lambda|)|x-s|}\|u_0\|\\
$$

Next, observe that 
$$
w_\lambda(x) = U_s(x)w_\lambda(s)+i\int_s^x U_t(x)\begin{pmatrix}0\\\lambda^{-1}f(t)\end{pmatrix}dt
$$
Therefore,  for all $s\in \mathbb{R}$,
\begin{align*}
e^{-\gamma|x|}\|w_\lambda(x)\|&\leq C e^{\frac{1}{2}(\gamma+|\Im \lambda|)|x-s|-a|x|}\|w_\lambda(s)\|+C\int_s^x e^{\frac{1}{2}(\gamma+|\Im \lambda|)(|x-t|)-a|x|}|f(t)|dt\\
&\leq C(e^{\frac{1}{2}(\gamma+|\Im \lambda|)|x-s|-\gamma|x|}\|w_\lambda(s)\|+e^{-\frac{1}{2}(\gamma-|\Im \lambda|)|x|}\|e^{\gamma|x|}f\|_{L^2(s,x)})
\end{align*}
Therefore averaging in $s\in (2,3) $, we have
$$
e^{-2\gamma|x|}\|w_\lambda(x)\|^2\leq C(e^{-(\gamma-|\Im \lambda|)|x|}\|w_\lambda\|_{L^2(2,3)}^2+ e^{-(\gamma-|\Im \lambda|)|x|}\|e^{\gamma|\cdot|}f(\cdot)\|^2_{L^2(1\leq \cdot \leq \max(3,|x|))}).
$$
Finally integrating in $x$, we obtain for $R>3$,
$$
\|e^{-\gamma|x|}w_\lambda\|_{L^2([1,R)}\leq C(\|e^{\gamma |x|}f\|_{L^2(1,R)}+\|w_\lambda\|_{L^2(2,3)}).
$$
In particular, for $u\in C_c^\infty([1,\infty))$, 
\begin{equation}
\label{e:step1}
\|e^{-\gamma\langle x\rangle}u\|_{H^1(1,R)}\leq C(\|e^{\gamma\langle x\rangle }(P_V-\lambda^2)u\|_{L^2([1,R))}+\|u\|_{H^1(2,3)}.
\end{equation}

Now, let $u\in H^2_{\loc}$ with $e^{\gamma\langle x\rangle }(P_V-\lambda^2)u\in L^2$ and $\chi \in C_c^\infty(\mathbb{R})$ with $\chi \equiv 1$ on $[-1,1]$. Then, let $u_{n,k} \in C_c^\infty([1,\infty)$ with 
$$
u_{n,k}\overset{H^2}{\underset{k\to \infty}{\longrightarrow}}\chi(n^{-1}x)u
$$

Fix $R>0$. Then, letting $n>R$ and applying~\eqref{e:step1} with $u=u_{n,k}$,  we obtain
$$
\|e^{-\gamma\langle x\rangle} u_{n,k}\|_{H^1}\leq C(\|e^{\gamma\langle x\rangle}(P_V-\lambda^2)u_{n,k}\|_{L^2(1,R)}+\|u_{n,k}\|_{H^1(2,3)}).
$$
Sending $k\to \infty$ and using that $\chi(n^{-1}x)\equiv 1$ on $[1,R)$, we have
$$
\|e^{-\gamma\langle x\rangle}u\|_{H^1}\leq C(\|e^{\gamma\langle x\rangle}(P_V-\lambda^2)u\|_{L^2(1,R)}+\|u\|_{H^1(2,3)}).
$$
Sending $R\to \infty$ we obtain
$$
\|e^{-\gamma\langle x\rangle}u\|_{H^1}\leq C(\|e^{\gamma\langle x\rangle}(P_V-\lambda^2)u\|_{L^2}+\|u\|_{H^1(2,3)}).
$$
Finally, to complete the proof of~\eqref{e:est1} observe that
$$
\|u\|_{H^2(2,3)}\leq C( \|(P_V-\lambda^2)u\|_{L^2(1,4)}+\|u\|_{H^1(1,4)}).
$$
and similarly
\[
\begin{split}
\|e^{-\gamma\langle x\rangle }u\|_{H^2}  & \leq C\|e^{-\gamma\langle x\rangle}u\|_{H^1}+C\|e^{-\gamma\langle x\rangle}\partial_x^2u\|_{L^2}\\
& \leq  C \|e^{-\gamma\langle x\rangle}u\|_{H^1}+C\|e^{-\gamma\langle x\rangle}(P_V-\lambda^2)u\|_{L^2}. 
\end{split} \]
This completes the proof of \eqref{e:est1}.
\end{proof}


\begin{proof}[Proof of Proposition \ref{p:expEst}]
Applying Lemma \ref{l:apriori} with $ f \in L^2_{\rm{comp}} $
and $ u = R_{_{V_2}} ( \lambda ) f $ 
shows that for $ \Im \lambda > - \gamma $ 
the meromorphic family $ R_{_{V_2}}  ( \lambda ) : L^2_{\rm{comp}}  \to 
L^2_{\rm{loc}}  $ has a better mapping property: 
\[ R_{_{V_2}} ( \lambda ) : L^2_{\comp } \to e^{ \gamma \langle x \rangle } L^2  .\]
In particular for $ \chi \in \CIc ( \RR ) $ and 
$ g \in e^{-\gamma \langle x \rangle } L^2 ( \RR ) $, 
\begin{equation}
\label{eq:langle}
\begin{aligned} | \langle R_{_{V_2}} ( \lambda ) \chi f , g \rangle_{L^2} | & \leq( \|e^{\gamma\langle x\rangle}\chi f\|_{L^2}+\|R_{_{V_2}}(\lambda)\chi f\|_{L^2(1,4)}) \| g \|_{ e^{ - \gamma\langle x \rangle } L^2 }\\
&\leq 
C ( \lambda ) \| f \|_{ L^2 } \| g \|_{ e^{ - \gamma\langle x \rangle } L^2 } , \ \ 
\Im \lambda > - \gamma \, 
\end{aligned}
\end{equation}
where $ C ( \lambda ) $ is bounded on compact subsets of 
$ \mathcal W_\alpha \setminus {\rm{Res}} ( P_{V_2} ) $ (where $\rm{Res}$ denotes the set of resonances). 
On the other hand, for $ \Im \lambda > 0 $, 
\[ \langle R_{_{V_2}} ( - \bar \lambda ) \chi f , g \rangle_{L^2} =
\langle f , \chi R_{_{V_2}} ( - \bar \lambda )^* g \rangle_{L^2 } = 
\langle f , \chi R_{_{V_2}} ( \lambda ) g \rangle_{L^2 } . \]
But then, \eqref{eq:langle} and analytic continuation show that 
\[  \chi R_{_{V_2}} ( \lambda ) : e^{ - \gamma \langle x \rangle } L^2 ( \RR) 
\to L^2 ( \RR ) , \ \ \chi \in \CIc ( \RR ) . \]
We can then apply Lemma~\ref{l:apriori} again to see 
that \eqref{eq:gammer} holds.
\end{proof}

\subsection{Completion of the proof of Proposition~\ref{p:mer}}
\label{mer}

We first observe that with $V=V_1+V_2$,
$$
(P_V-\lambda^2)R_{_{V_2}}(\lambda)=I+V_1R_{_{V_2}}(\lambda).
$$
Then, for $\Im \lambda\gg 1$, $I+V_1R_{_{V_2}}(\lambda):L^2\to L^2$ is invertible by Neumann series and hence
$$
R_{_{V}}(\lambda)=R_{_{V_2}}(\lambda)(I+V_1R_{_{V_2}}(\lambda))^{-1}, \qquad \Im \lambda \gg 1.
$$
For $\Im \lambda>0$, 
$
R_{_{V_2}}(\lambda):L^2\to H^2
$
and hence
$
e^{-\gamma\langle x\rangle}R_{_{V_2}}(\lambda):L^2\to L^2
$
is compact. In particular, since $e^{\gamma\langle x\rangle} V_1\in L^\infty$, 
$
I+V_1R_{_{V_2}}(\lambda):L^2\to L^2
$
is a Fredholm operator of index 0 and we have by the analytic Fredholm theorem, (see e,g,~\cite[Appendix C.3]{res})
$$
R_{_V}(\lambda)=R_{_{V_2}}(\lambda)(I+V_1R_{_{V_2}}(\lambda))^{-1}:L^2\to L^2,\qquad \Im \lambda>0,
$$
is a meromorphic family of operators with finite rank poles.

Next, consider $\lambda \in \mathcal{W}_\alpha\cap\{2|\Im \lambda|<\gamma\}$. Then, letting $\gamma'>0$ such that $2|\Im \lambda|<\gamma'<\gamma$, we have by Proposition~\ref{p:expEst}
$$
e^{-\gamma \langle x\rangle}R_{_{V_2}}(\lambda):e^{-\gamma'\langle x\rangle/2}L^2\to e^{-(\gamma'+2(\gamma-\gamma'))\langle x\rangle/2}H^2
$$
In particular, since $\gamma'<\gamma$,
$$
e^{-\gamma \langle x\rangle}R_{_{V_2}}(\lambda):e^{-\gamma'\langle x\rangle/2}L^2\to e^{-\gamma'\langle x\rangle/2}L^2
$$
is compact and hence, using that $e^{\gamma \langle x\rangle} V_1\in L^\infty$, we have
$$
I+V_1R_{_{V_2}}(\lambda):e^{-\gamma'\langle x\rangle/2}L^2\to e^{-\gamma'\langle x\rangle/2}L^2
$$
is Fredholm. Therefore, by the analytic Fredholm theorem and Proposition~\ref{p:expEst}, for all $\lambda \in \mathcal{W}_\alpha$ with $2|\Im \lambda|<\gamma'$, 
$$
R_{_{V}}(\lambda)=R_{_{V_2}}(\lambda)(I+V_1R_{_{V_2}}(\lambda))^{-1}:e^{-\gamma'\langle x\rangle/2}L^2\to e^{\gamma'\langle x\rangle/2}L^2,
$$
is a meromorphic family of operators with finite rank poles.\qed

\vspace{0.4cm}
\begin{center}
\noindent
{\sc  Appendix: Gevrey properties of exponentials.}
\end{center}
\renewcommand{\theequation}{A.\arabic{equation}}
\refstepcounter{section}
\renewcommand{\thesection}{A}
\setcounter{equation}{0}

To explain the asymptotics appearing in Fig.\ref{fig}, we show (in the notation of \eqref{eq:gev}) that for 
$ \Im z > 0 $
\begin{equation}
\label{eq:gez}  x \mapsto e^{ i z /x } \in 
\widehat G^{2, \sigma ( z ) } ( [ 0 , \infty) ) , \ \ \ \
\sigma ( z ):={\frac{|z|-|\Re z|}{2}}. 
\end{equation}
To prove this we need the following 
characterization of $ G^{2, \sigma} ( \RR ) \cap \CIc ( \RR ) $:
(see \cite[Lemma 12.7.4]{H2} for a similar argument):
for $ u \in \CIc ( \RR ) $ 
\begin{equation}
\label{eq:ges} 
\exists \, C  \ | \widehat u ( \xi ) | \leq C  e^{ - 2 |  \sigma \xi|^{\frac12} } \ \Rightarrow \ 
u \in G^{2, \sigma } ( \RR ) \ \Rightarrow \ 
\exists \, C  \ | \widehat u ( \xi ) | \leq C \langle \xi \rangle^{\frac12}  e^{ - 2 |  \sigma \xi|^{\frac12} } 
. \end{equation}
\begin{proof}[Proof of \eqref{eq:ges}]
We start by estimating $ D_x^n u $ under the assumption on $ \widehat u $:
\[  \begin{split} | D_x^n u ( x ) | & \leq \int_{\RR}  |\xi|^n | \widehat u ( \xi ) | d \xi 
\leq C (4 \sigma)^{-n} \int_{\RR} | ( 4 \sigma |\xi|)^{\frac12 } )^{2n}  e^{ - ( 4 \sigma |\xi|)^{\frac12} }  d \xi \\ & 
= 
4 C (4 \sigma)^{-n-1 } \int_{0}^\infty t^{2n+1} e^{ -t} dt 
= C (4 \sigma) ^{-n} \frac{( 2 n )!}{ (n!)^2 } (n!)^2 
\leq 
C' n^{-\frac12 } \sigma^{-n} (n!)^2 , 
\end{split} \]
where in the last inequality we used Stirling's approximation.

On the other hand, if $ u \in G^{2, \sigma} \cap \CIc $ then, for 
$ |\xi| > 1 $, 
\[ \begin{split}  | \widehat u ( \xi ) | & = \left| \int_\RR  D_x^n u ( x ) \xi^{-n} e^{- i 
x \xi } d x \right| \leq 
C |\xi|^{-n} \sup | D^n_x u | \leq C' |\xi| ^{-n}  \sigma^{-n} ( n!)^2 
\\
& \leq C'' n \left( \frac { n }{ | \sigma \xi|^{\frac12} } \right)^{2n} 
e^{-2n} \leq C''' |\xi|^{\frac12} e^{ - 2 | \sigma \xi|^{\frac12} } ,  \end{split} \]
where again we used Stirling's formula and to obtain the last inequality we chose $ n = [ | \sigma \xi |^{\frac12} ] $.
\end{proof}

\begin{proof}[Proof of \eqref{eq:gez}]
We start by computing the asymptotics of the (distributional) Fourier transform of $ w ( x ) := x_+^0 e^{ i z /x } $ for a {\em fixed} $ z$ 
with $ 0 < \arg z <  \pi $.
For $ \xi  \geq 1 $ we deform the contour to $x=-e^{i\varphi_+}y$ with $\varphi_+:=\frac{\arg (z)+\pi}{2}$ to obtain 
\[  \begin{split} 
\widehat w ( \xi ) = \lim_{\epsilon \to 0+ } \int_0^\infty e^{ i ( z/x - x (\xi - i \epsilon) ) }
dx & = \lim_{\epsilon \to 0 +} 
-e^{i\varphi_+}\int_0^\infty e^{ i e^{ i \varphi_+}  ( |z|/x 
+ x ( \xi - i \epsilon ) )} dx \\
& = 
-e^{i\varphi_+}|z|^{\frac12}\xi^{-\frac12} 
\int_0^{\infty} e^{i \xi^{\frac12}|z|^{\frac 12} e^{ i \varphi_+} ( 1/t + t ) } 
dt . \end{split}\]
 Since $ 0 < \arg z < \pi $, 
$ \pi/2 < \varphi_+ <  \pi$, and $ \cos (\varphi_+ +\pi/2)= - \cos( \arg ( z )/2 ) < 0 $. 
We can now apply the method of steepest descent to obtain
\begin{gather*} \widehat w ( \xi ) = c_+ \xi^{-\frac34} |z|^{ \frac 14} e^{i e^{ i \varphi_+ } 2 |z|^{\frac12} \xi^{\frac12} }(1+O(|\xi|^{-\frac12})) , 
  \ \  \ \xi \geq 1.
\end{gather*} 
The cases of $ -\xi \geq 1$ is handled similarly except that we deform to $x=e^{i\varphi_-}$ with $\varphi_-={\arg(z)}/{2}$ to obtain
\[ \widehat w ( \xi ) = c_- (-\xi)^{-\frac34} |z|^{ \frac 14} e^{i e^{ i \varphi_- } 2 |z|^{\frac12} (-\xi)^{\frac12} }(1+O(|\xi|^{-\frac12})) 
, \ \ \  \xi \leq -1.
\] 

We now choose  $\chi \in \CIc ( \RR) $ with $ \chi \in \gamma^2 ( \RR ) $
(see \eqref{eq:gev}), 
$ \chi (x )  \equiv 1 $ for $ |x| \leq 1 $, and consider $ u ( x ) 
:= \chi ( x ) w ( x ) $. 
From \eqref{eq:ges}  we see that for any $ \gamma > 0 $
$ |\widehat \chi ( \xi )| \leq C_\gamma e^{ - \gamma |\xi|^\frac12} $.
Hence, for  $ \xi \gg 1 $,  and $ \gamma \gg |z| $, and with 
$  \sigma_+(z) :=\cos^2(\arg(z)/2) |z| $, $  \sigma_-(z): =\sin^2(\arg(z)/2) |z|$, 
we have 
\[ \begin{split} 
&\Big|c_+^{-1}|z|^{-\frac14}\xi^{\frac34}e^{-i e^{ i \varphi_+ } 2 |z|^{\frac12} \xi^{\frac12} } \widehat u ( \xi )- \chi(0)\Big|  \\
& =  \frac{1}{2\pi}\int (c_+^{-1}|z|^{-\frac14}\xi^{\frac34}e^{-i e^{ i \varphi_+ } 2 |z|^{\frac12} \xi^{\frac12} } \widehat{w}(\xi-\zeta)-1)\widehat{\chi}(\zeta) d\zeta\\
&\leq C_\gamma\int_{-\xi/2}^{\xi/2} \frac{|\xi|^{\frac34}-|\xi-\zeta|^{\frac34}}{|\xi-\zeta|^{\frac34}}e^{-2|\sigma_+(z)(\xi-\zeta)|^{\frac12}+2|\sigma_+(z)\xi|^{\frac12}-\gamma |\zeta|^{\frac12}}d\zeta+O(e^{-c\gamma|\xi|^{\frac12}})\\
&\leq C_\gamma|\xi|\int_{-\frac12}^{\frac12} e^{- 2 |\xi|^{\frac12}(|\sigma_+(z)|^{\frac12}(\gamma |r|^{\frac12} + (1-r)^{\frac12}-1 ))}|r|dr+O(e^{-c\gamma|\xi|^{\frac12}})= \mathcal O ( \xi^{-\frac12} ). \\
\end{split} \]
Similarly, for $-\xi\gg 1$, and $\gamma \gg \sigma_-(z)$, 
$$
\Big|c_-^{-1}|z|^{-\frac14}|\xi|^{\frac34}e^{-i e^{ -i \varphi_- } 2 |z|^{\frac12} |\xi|^{\frac12} }\widehat u ( \xi )- \chi(0)\Big|= \mathcal O ( |\xi|^{-\frac12} ).
$$
In particular, with we have 
$
\widehat{u}(\xi)=c_{\pm}|z|^{\frac14}|\xi|^{-\frac34}e^{-2|\sigma_{\pm}(z)\xi|^{\frac12}}(\chi(0)+o(1))$, as $ \pm\xi\to \infty$.
Since $ \min( \sigma+ ( z), \sigma_- ( z )  ) = 
\sigma ( x ) := ( |z| - | \Re z |)/2 $, this and \eqref{eq:ges} give
 \eqref{eq:gez}.
\end{proof}

\end{document}